\numberwithin{equation}{section}
\begin{document}
\title[A Domain-Theoretic Bishop-Phelps  Theorem] {A Domain-Theoretic Bishop-Phelps Theorem $^{*}$}
\author[A. Hassanzadeh, I. Sadeqi and  A. Ranjbari]{Ali Hassanzadeh$^{1}$, Ildar Sadeqi$^{2}$ and  Asghar Ranjbari$^3$}
\address{\indent $^{1,2}$Department of Mathematics,
\newline \indent Sahand University of Technology, Tabriz, Iran.
}
\email{\rm $^1$ a\_hassanzadeh@sut.ac.ir, ali.hassanzadeh@guest.unimi.it}
\email{\rm $^2$ esadeqi@sut.ac.ir}
\address{\indent $^{3}$Department of Pure Mathematics, Faculty of
Mathematical Sciences,
\newline \indent  University of Tabriz,
 Tabriz, Iran.} \email{\rm ranjbari@tabrizu.ac.ir}
\thanks{*Corrected version}
\begin{abstract}
In this paper, the notion of  $c$-support points of a set in a
semitopological cone is introduced. It is  shown  that  any nonempty
convex Scott closed bounded set  has a $c$-support point in a
cancellative   $bd$-cone under certain condition.
We also introduce the notion of $wd$-cone  and then we prove a Bishop-Phelps type theorem for  $wd$-cones, especially for normed cones, under appropriate conditions. Finally, using of the Bishop-Phelps  technique,
 we obtain  a result about the fixed points of a mapping on $s$-cones.

\end{abstract}
\subjclass[2010]{46N10; 47L07;  54D10.}

\keywords{s-cone; Scott topology; Support point; Bishop-Phelps theorem.}

\maketitle

\baselineskip=15.8pt

  \newtheorem{thm}{Theorem}[section]
\newtheorem{lem}[thm]{Lemma}
\newtheorem{prop}[thm]{Proposition}
\newtheorem{cor}[thm]{Corollary}
\newtheorem{conj}[thm]{Conjecture}
\theoremstyle{definition}
\newtheorem{deff}[thm]{Definition}
\newtheorem{exm}[thm]{Example}
\theoremstyle{remark}
\newtheorem{rem}[thm]{Remark}
\setcounter{section}{0}

 \newcommand{\bd}[1]{\mathbf{#1}}  % for bolding symbols
\newcommand{\RR}{\mathbb{R}}      % for Real numbers
\newcommand{\ZZ}{\mathbb{Z}}      % for Integers

\newcommand\twoheaduparrow{\mathrel{\rotatebox[origin=c]{90}{$\twoheadrightarrow$}}}
\newcommand\twoheaddownarrow{\mathrel{\rotatebox[origin=c]{90}{$\twoheadleftarrow$}}}

%%%%%%%%%%%%%%%%%%%%%%%%%%%%%%%%%%%%%%%%%%%%%%%%%%%%%%
\section{Introduction}
Domain theory which is based on logic and computer science, started as an outgrowth of theories
of order. Progress in this domain rapidly required a lot of material on
(non-Hausdorff) topologies.
After about 40 years of domain theory, one is forced to recognize that
topology and domain theory have been beneficial to each other \cite{hofmann, jean}.

One of Klaus Keimel's many mathematical interests is the interaction
between order theory and functional analysis. In recent years this
has led to the beginnings of a ‘domain-theoretic functional
analysis, which may be considered to be a topic within
positive analysis in the sense of Jimmie Lawson \cite{lawson}.
In the latter, notions of positivity and order play a key role,
as do lower semicontinuity and so $T_0$ spaces.
 Some basic functional analytic tools were developed by Roth and Tix  and later
 by Plotkin and Keimel   for these structures.
Roth has written  several papers in this area including his  papers
 \cite{roth, roth2002}
 on Hahn-Banach type theorems for locally convex cones.  Tix  in her 1999 Ph.D. thesis
 gave a domain-theoretic
version of these theorems in the framework of $d$-cones (see  \cite{ tix3, tix}).
Plotkin subsequently gave another separation theorem, which was
incorporated, together with other improvements, into a revised
version of Tix's thesis \cite{keimel3,plotkin}. Finally,   Keimel
\cite{keimel} improved the Hahn-Banach theorems to semitopological
cones.

The theory of locally convex cones, with applications to Korovkin
type approximation theory for positive operators and to
vector-measure theory, was developed in the books by Keimel and Roth
\cite{109} and Roth \cite{212}, respectively.

The Bishop-Phelps theorem \cite{bishop} is a fundamental theorem in
functional analysis which has many applications in the geometry of
Banach spaces, fixed point theory and optimization (for instance see
\cite{fixedbook, fabian}).
The classical Bishop-Phelps theorem states that ``the set of support
functionals for a closed bounded convex subset $B$ of a real Banach
space $X$ is norm dense in $X^*$ and the set of support points of
$B$ is dense in the boundary of $B$''
 \cite{bishop}.
The present paper contributes a domain-theoretic analogue
of the classical Bishop-Phelps theorem for semitopological cone.

The work on Hahn-Banach-type theorems has found application in theoretical
computer science, $viz.$ the study of powerdomains.
 It was a pleasant surprise that the separation theorems found application in this
development and we anticipate that so too will the domain-theoretic Bishop-Phelps theorem given here.
As an application of the  Bishop-Phelps  theorem, we show that a mapping on a $wd$-cone has a fixed point under some conditions.

\section{Preliminaries}
\label{sectionp}
For convenience of the reader we give a survey of the relevant materials from  {\cite{abram}}, {\cite{Alip}}, {\cite{jean}} and {\cite{keimel}},  without
proofs, thus making our exposition self-contained.

Let $B$ be a nonempty subset of a real Banach space $X$ and $f$ be
a nonzero continuous linear functional on $X$. If $f$
attains either its maximum or its minimum over $B$ at
the point $x \in B$, we say that $f$ supports $B$ at $x$ and
that $x$ is a {\it support point} of $B$.

For subsets $A$ of a partially ordered set $P$ we use the following notations:

\centerline{$\downarrow A =: \{x \in P | x \leq a  \text{ for  some} \  a \in A\},$}

\centerline{$\uparrow A =: \{x \in P | x \geq a \ \text{for some}  \ a \in A\}.$}

It is called  that $A$ is a lower or upper set, if $\downarrow A = A$ or $\uparrow A = A$, respectively.

We denote by $\Bbb R_+$  the subset of all nonnegative reals.
Further, $\overline{\Bbb R} = \Bbb R \cup \{+\infty\}$and $
\overline{\Bbb R}_+ = \Bbb R_+ \cup \{+\infty\}$. Addition,
multiplication and the order are extended to $+\infty$ in the usual way. In particular, $+\infty$
becomes the greatest element and we put $0 \cdot (+\infty) = 0$.

According to \cite{keimel},
 a {\it cone} is a set $C$, together with two
operations $+ : C \times C \rightarrow$ C and $\cdot :\Bbb R_+\times C \rightarrow C$ and
 a neutral element $0 \in C $, satisfying the
following laws for all $v,w, u \in C$ and $\lambda, \mu \in \Bbb R_+$:
\begin{align*}
    &0+v=v,&&1v=v,\\
  &v+(w+u) = (v+w)+u,&& (\lambda \mu)v = \lambda(\mu v),\\
  &v+w=w+v,&&(\lambda + \mu)v = \lambda v + \mu v, \\
  & &&\lambda(v+w) = \lambda v + \lambda w.
\end{align*}

An {\it ordered cone} $C$ is a cone endowed with a partial order $\leq$ such that the
addition and multiplication by
fixed scalars $r \in \Bbb R_+$ are order preserving, that is, for all $x, y, z \in~C$ and all $r \in\Bbb R_+$:
$$x \leq y \Rightarrow x + z \leq y + z\ \  \text{and}\ \  r · x \leq r · y. $$

Let us recall that a {\it{linear function}} from a cone $(C,+,
\cdot)$ to a cone $(C',+, \cdot)$ is a function $f:C \rightarrow C'$
such that $f(v + w) = f(v) + f(w)$ and $f(\lambda v) = \lambda
f(v)$, for all $v,w \in C$ and $ \lambda \in \Bbb R_+$.

A subset $D$ of a cone $C$ is said to be {\it{convex}} if for all $u, v \in D$ and
 $\lambda \in [0, 1], \lambda u + (1 - \lambda)v \in D$. The convex hull of a
  set $D$ is defined to be the smallest convex set containing $D$.

For example, $(\Bbb R_+)^n$ is a cone, with respect to the coordinate-wise
operations. On $\Bbb R_+$, the cone order is just the usual order
$\leq$ of the reals. On $(\Bbb R_+)^n$, it is the coordinate-wise
order.

Recall that a partially ordered set $(A,  \leq)$  is called directed if for every $a , b \in A$ there exists $c\in A$ with $a,b \leq c$.
 A partially ordered set  $(D, \leq)$ is called a directed complete partial order ({\it dcpo}) if every directed subset $A$ of $D$, has a
least upper bound in $D$. The least upper bound of a directed subset $A$ is denoted by $\sqcup^\uparrow A$,
and it is also called the directed supremum.

In any partially ordered set $P$, the {\it way-below} relation $x \ll y$
is defined by: $x \ll y$ iff, for any directed subset $D \subset P$
for which supremum of $D$ exists, the relation $y \leq \sqcup^\uparrow D$ implies the
existence of a $d \in D$ with $x \leq d$.
 An element $y \in P$ is called finite if, $y \ll y$.

 The partially ordered set
$P$ is called continuous if, for every element $y$ in $P$, the set
$\twoheaddownarrow y=:~\{x\in~P; x \ll y\}$ is directed and
 $y =\sqcup^\uparrow \twoheaddownarrow y$.
 Note that $x \ll y$ implies $x \leq y$ {\cite[Prop. 5.1.4]{jean}}.

Any $T_0$-space $X$ comes with an intrinsic order, the {\it specialization order} which is defined by $x \leq
y$ if the element $x$ is contained in the closure of the singleton $\{y\}$ or, equivalently, if every open
set containing $x$ also contains $y$.

Given any  ordering $\leq$, there are at least two topologies
with $\leq$ as specialization  ordering, the coarsest possible one (the upper topology) and the
finest possible one (the Alexandroff topology) (see {\cite[Sec. 4.2.2]{jean}} for more details). Additionally, there are some other interesting topologies in
between. An important example of a topology that sits in between is the Scott topology.

Let $D$ be a partially ordered set. A subset $A$ is called
 Scott closed if it is a lower set and is closed under supremum of
directed subsets, as far as these suprema exist. Complements of
Scott closed sets are called  Scott open.
The collection of all Scott open sets is a topology, called the {\it Scott topology} on $D$ {\cite[Prop. 4.2.18]{jean}}. We
write $D_{\sigma}$ for the set $D$ with the Scott topology.

The basic notion is that of a {\it Scott continuous} function: A
function $f$ from a partially ordered set $P$ to a partially ordered
set $Q$ is called Scott continuous if it is order preserving and if,
for every directed subset $D$ of $P$ which has a least upper bound
in $P$, the image $f (D)$ has a least upper bound in $Q$ and $f
(\sqcup^\uparrow D) = \sqcup^\uparrow f(D)$.

Let $P, Q$ be two partially ordered sets. A map
$f : P_{\sigma}\rightarrow Q_{\sigma}$ is continuous iff $f : P \rightarrow Q$ is Scott continuous
{\cite[Prop. 4.3.5]{jean}}.

In a continuous partially ordered set $C$, the set $\twoheaduparrow
x$ is Scott open for all $x$. More generally, for every subset $E$ of $C$,
the subset $\twoheaduparrow E$ is open in $\sigma_C$
 {\cite[Prop. 5.1.16]{jean}},
 so $\twoheaduparrow E \subset int(\uparrow E)$.
If   the subset $E\subset C$ is finite, then
$\twoheaduparrow E = int (\uparrow E)$
{\cite[Prop. 5.1.35]{jean}}.

On the extended reals $\overline{\Bbb R}$ and on its subsets ${\Bbb
R_+}$ and $\overline{\Bbb R}_+$ we use the {\it upper topology},
the only open sets for which are the open intervals  $\{s : s > r\}$.
This upper topology is $T_0$, but far from being Hausdorff.
\subsection{Semitopological Cones}
According to \cite{keimel}, a {\it semitopological cone} is a  cone
with a $T_0$-topology such that the addition and scalar multiplication
are separately continuous, that is:
\begin{align*}
&a \mapsto ra: C \rightarrow C& \mbox {is continuous  for  every fixed} \ r > 0,& \\
&r \mapsto ra: \Bbb R_+ \rightarrow C& \mbox{is continuous for every fixed} \ a \in C,&  \\
& b \mapsto a + b : C \rightarrow C& \mbox{is continuous for every
fixed} \ a \in C.&
\end{align*}

An {\it $s$-cone} is a cone with a partial order such that addition and scalar multiplication:
$$(a,b) \mapsto a + b : C\times C \rightarrow C, \ \ (r, a) \mapsto ra:\Bbb R_+\times C \rightarrow C$$
are Scott continuous. A $s$-cone is called a {\it $[b]d$-cone} if its order is [bounded] directed
complete, i.e., if each [upper bounded] directed subset has a least upper bound.

Note that every $s$-cone is a semitopological cone with respect to
its Scott topology {\cite[Prop. 6.3]{keimel}}.

A cone $C$ with a topology is called {\it locally convex}, if each point has a
neighborhood basis of open convex neighborhoods.

 Let $C$ be a semitopological cone. The cone $C^*$ of all linear continuous
functionals $f: C \rightarrow \overline{\Bbb R}_+$ are called
dual of $C$.

We shall use the following {\it separation theorem}
{\cite[Theorem 9.1]{keimel}}:
%\label{sep}
in a semitopological cone $C$ consider a nonempty convex subset $A$ and
an open convex subset $U$. If $A$ and $U$ are disjoint, then there exists a continuous linear
functional $f: C \rightarrow \overline{\Bbb R}_+$ such that $f(a) \leq 1 < f(u)$ for all $a \in A$ and all $u \in U$.

Finally, we shall use the following {\it  strict separation theorem}
{\cite[Theorem 10.5]{keimel}}:
let C be a locally convex semitopological cone.
Suppose that $K$ is a compact convex set and that $A$ is a nonempty closed convex set
disjoint from $K$. Then there is a continuous linear functional $f$ and an $r$
such that $f (b) \geq r >1 \geq f (a)$ for all $b$ in $K$ and all $a$ in $A$.
\subsection{Normed Cones}
A cancellative cone (more precisely cancellative asymmetric  cone)  is a cone $C$,  satisfying
the following laws for all $v,w, u \in C$:
\begin{align*}
   &v+u = w+u \Rightarrow v=w,&&(cancellation)\\
  &v+w = 0 \Rightarrow v=w=0.&& (strictness)
\end{align*}
Let $C$ be a cancellative cone, we define a partial order on $C$ by $x\preccurlyeq y \Leftrightarrow y \in x+C$,  called  the {\it cone order} on $C$.

According to \cite{selinger}, a  norm on  a  cancellative cone $C$ is a function $\parallel \cdot \parallel : C \rightarrow \mathbb{R}_+$
satisfying the following conditions for all $v,w \in C$ and $\lambda \in \mathbb{R}_+:$
\begin{align*}
    &\parallel v+w \parallel \leq \parallel v\parallel + \parallel w\parallel ,\\
  &\parallel\lambda v \parallel = \lambda \parallel v\parallel ,\\
  &\parallel v\parallel= 0 \Rightarrow v=0 ,\\
  &v \preccurlyeq w \Rightarrow \parallel v\parallel \leq \parallel w\parallel .
\end{align*}

A {\it normed cone} $C = \langle C, \parallel\cdot\parallel \rangle$ is a  cancellative cone equipped with a norm.
 The {\it unit ideal} of a normed cone $C$ is the set
$$U_C = \{ u \in C ; \parallel u \parallel \leq 1\}.$$

A normed cone $C$ is called {\it complete} if its unit ideal
is a dcpo.
For example
the   normed cones $\mathbb{R}_+, \mathbb{R}^n_+$, $l_{\infty}^+$ (the set of all bounded sequences in $\mathbb{R}_+$) together with the supremum norm $\|(x_i)_i\| = \sup_i x_i$ and $l_1^+$ (the set of all sequences in $\mathbb{R}_+$ of bounded sum) together with the sum
norm $\|(x_i)_i\| = \sum_i x_i$ are all complete and continuous {\cite[Exam. 2.7]{selinger}}.
We will say simply   continuous normed cone for continuous complete normed cone.

\section{Main results}
The purpose of this section is to establish the Bishop-Phelps type
theorem for semitopological cones. Indeed we want to study the Bishop-Phelps theorem  in non-Hausdorff setting.

\begin{rem}
($a_1$) Let $B$  be a nonempty  Scott closed set in a semitopological cone $C$. Since $0\in B$, so for any linear functionals $f: C \rightarrow \overline{\Bbb R}_+$ we have $f(0) = \inf f(B)$.

 ($a_2$) If $B$ is a  nonempty  compact set in a semitopological cone $C$  and $f: C \rightarrow \overline{\Bbb R}_+$ is a continuous map, then
 there is an element $z \in B$ such that $f(z) = \inf f(B)$ {\cite[Lemma 3.8]{jean2}}.
 Since in  a semitopological cone, a compact set is not necessarily closed, so  the proof of this statement is different from the  method of the classical analysis and the  result is not true for supremum (for details see \cite{jean2}).
 \end{rem}

 Note that a subset $B$ in a partially ordered set $C$,  is called {\it bounded} if there exists an element $d\in C$ such that for any $b\in B$, $b\leq d$.

 Let $C$ be an $s$-cone. We will say that $C$ has the {\it additive  property},  if the following axioms are satisfied:

(i) $x'\ll x$ and $y'\ll y$ implies $x'+y'\ll x+y$.

(ii) $x\ll \lambda x$ for any scalar $\lambda >1$ and $x>0$.

\begin{exm}
($b_1$) $\Bbb R^n_+$   is a cancellative continuous $bd$-cone  with the ordering:
$$(x_1,...,x_n) \leq (y_1,...,y_n) \Longleftrightarrow \forall n  \ \ x_n \leq y_n.$$
In $\Bbb R^n_+$, we have
$(x_1,...,x_n) \ll (y_1,...,y_n)$  iff for all $i$, $x_i = 0$ or $x_i < y_i$.
It is easy to see that  $\Bbb R^n_+$ has the additive property.

($b_2$) The cones $\ell_1^+$ and $\ell_{\infty}^+$
are cancellative continuous $bd$-cones
 under  usual pointwise ordering which also have the additive property.
\end{exm}

It is known that the interior of a convex set  in a topological linear space is a convex set, but this is not true in semitopological cones in general \cite{keimel}.
\begin{rem}
($c_1$) In a continuous $s$-cone $C$, which has the additive property, the interior of every convex upper set is convex {\cite[Lemmas 4.10 \& 6.14]{keimel}}.

($c_2$) In a continuous normed cone $C$, for any convex
set $B$, the open set $\twoheaduparrow B$, is    convex
 {\cite[Lemma 2.16]{selinger}}.

($c_3$)
In a continuous $s$-cone $C$, which has the additive property, the interior of every  upper set is nonempty.
To see this,
let $A$ be an upper set. For $x\in A$  consider $\twoheaduparrow x$, which is a nonempty open set in $A$, so $int(A) $ is nonempty.
\end{rem}

\begin{prop}
Let $C$ be a  continuous $s$-cone which has the additive property, and $B\subset C$ be an  upper convex set.
 If $x\in  B$  such that $\lambda x \not\in B$, whenever $\lambda < 1$,
 then there exists a continuous linear functional $f: C \rightarrow \overline{\Bbb R}_+$ such that  $f(x) = \inf f (B)$.
\end{prop}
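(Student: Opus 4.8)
The plan is to deduce the statement from the separation theorem recalled in the preliminaries, by separating the open convex set $B$ from the half-open radial segment cut out by $x$. Concretely, I would put
\[
A:=\{\lambda x : 0\le \lambda <1\}.
\]
This $A$ is nonempty, since $0=0\cdot x\in A$, and it is convex: for $\lambda_1,\lambda_2\in[0,1)$ and $\mu\in[0,1]$ the cone axioms give $\mu(\lambda_1 x)+(1-\mu)(\lambda_2 x)=(\mu\lambda_1+(1-\mu)\lambda_2)x$, and $\mu\lambda_1+(1-\mu)\lambda_2\in[0,1)$. The hypothesis that $\lambda x\notin B$ whenever $\lambda<1$ says exactly that $A\cap B=\emptyset$ (in particular $0\notin B$).

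Now $A$ is a nonempty convex set and $B$ is an open convex set disjoint from it, so the separation theorem furnishes a continuous linear functional $f:C\to\overline{\Bbb R}_+$ with
\[
f(a)\le 1< f(u)\qquad\text{for all } a\in A,\ u\in B .
\]
In particular $f$ is not the zero functional, so it is an admissible candidate. Reading the left inequality through linearity, $f(\lambda x)=\lambda f(x)\le 1$ for every $\lambda\in[0,1)$; this is the estimate I would use to bound $f(x)$ from above.

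The decisive step is then to let $\lambda\uparrow 1$. Since $\lambda f(x)\le 1$ for all $\lambda<1$ and $\sup_{\lambda<1}\lambda f(x)=f(x)$, the value $f(x)$ must be finite with $f(x)\le 1$. The right-hand inequality then gives $f(u)>1\ge f(x)$ for every $u\in B$, so that $f(x)\le\inf f(B)$; and since $x\in B$ we trivially have $\inf f(B)\le f(x)$. Putting the two together yields $f(x)=\inf f(B)$, which is the assertion.

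I expect the heart of the matter to be precisely this radial limit together with the reconciliation of the strict bound $f(u)>1$ on the open set $B$ with the value of $f$ at the endpoint $x$: one has to be sure that $f(x)\ne+\infty$ (so that $\lambda f(x)$ really increases to $f(x)$), and that the argument uses only the separate continuity of scalar multiplication available in the $T_0$ Scott setting when passing to the limit. By contrast, the convexity and disjointness bookkeeping of the first step is routine.
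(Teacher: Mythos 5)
Your proof is correct, and it takes a genuinely different (and tighter) route than the paper's. The paper separates $B$ from the \emph{single point} $A=\{\epsilon x\}$ for one fixed $\epsilon\in(0,1)$ and then asserts that $f(x)=\inf f(B)$ follows; but from that one-point separation one only gets $\epsilon f(x)\le 1<f(b)$ for all $b\in B$, i.e.\ $1\le\inf f(B)\le f(x)\le 1/\epsilon$, which does not yield the stated equality for a fixed $\epsilon$ (and the functional produced by the separation theorem depends on $\epsilon$, so one cannot simply let $\epsilon\to 1$ afterwards). By separating $B$ from the whole segment $A=\{\lambda x:0\le\lambda<1\}$ you get the bound $\lambda f(x)\le 1$ \emph{uniformly} in $\lambda$, and the radial limit $\lambda\uparrow 1$ (which, as you note, forces $f(x)<\infty$ and needs only arithmetic in $\overline{\Bbb R}_+$, not continuity) gives $f(x)\le 1<f(u)$ for all $u\in B$; combined with $\inf f(B)\le f(x)$ from $x\in B$, this is a complete deduction. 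Your version supplies exactly the uniformity that the paper's argument is missing.

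One further point, which your own inequalities expose and which you half-notice in your closing remark about ``reconciling'' $f(u)>1$ on $B$ with the value at $x$: since $x\in B$ you have $f(x)>1$, while the radial limit gives $f(x)\le 1$. This does not invalidate your proof --- a sound derivation of the conclusion from the hypotheses remains a proof of the implication --- but it shows the hypotheses can never be satisfied. Indeed, in a semitopological cone the map $r\mapsto rx:\Bbb R_+\to C$ is continuous for the upper topology on $\Bbb R_+$, so the preimage of the open set $B$ is an upper-open set containing $1$ and hence contains some $\lambda<1$; thus no $x\in B$ with $B$ open can satisfy $\lambda x\notin B$ for all $\lambda<1$. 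The proposition is vacuously true, and it would be worth recording that observation explicitly rather than leaving the tension unresolved.
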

\begin{proof}
 Let $x\in B$.
  By  the assumption,   $\lambda x\not \in B$ for every $0<\lambda<1$,
 so $x\not \in int(B)$.
 The interior of $B$ is a nonempty convex open set,
therefore by the separation theorem, there exists a continuous linear functional $f: C \rightarrow \overline{\Bbb R}_+$ such that
    $f(x) \leq f (b)$ for  all $b\in int(B)$.
    By continuity of C, for each $y \in B$, $\twoheaduparrow y =: \{a : y\ll  a \}$ is an open set in $B$ and so  it is included  in $int(B)$, so $f(x) \leq f(a)$
    for  all $a\in \twoheaduparrow y$.
    By the additive property $f(x) \leq f(y)$ for each $y\in B$, hence
$f(x) = \inf f (B)$.
\end{proof}
In the sequel, we consider  suprema instead of infima.
Let $B$  be a convex  closed set in a semitopological cone $C$.
A point $x\in B$ is called a {\it c-support point} for $B$, if there
exists a linear  continuous functional $f : C \rightarrow
\overline{\Bbb R}_+$ such that $f(x) = \sup f(B)$ and $f(x) < \infty$; such
a functional  $f$ is said a {\it c-support functional}.

\begin{rem}
Let $B$  be a convex closed set in a semitopological cone $C$. Then we have the following facts:
\begin{itemize}
\item[($d_1$)]
 If  the set  $B$ has a maximum, then any  linear continuous functional $f : C \rightarrow \overline{\Bbb R}_+$ is a c-support functional for $B$.

\item[($d_2$)]
If  the set $B$ has  nonempty interior, then $B$ is an unbounded set, so any  linear  continuous functional $f : C \rightarrow \overline{\mathbb{R}}_+$ on $B$ is  unbounded.

\item[($d_3$)]
If $C$ is a $d$-cone and $B$ is a directed Scott closed set, then
$\sqcup^\uparrow B \in B$, and so every linear
Scott continuous functional is a c-support functional for $B$.
\end{itemize}
\end{rem}

Now we  restrict our attention  to the case that  $B$ is  a nonempty
convex  closed set with empty interior.
 To establish  the Bishop-Phelps theorem for semitopological cones, we  need a discussion of certain cones:

Let $C$ be a cancellative  semitopological cone and $f: C \rightarrow \overline{\Bbb R}_+$ be a  continuous linear functional. For  $0<\delta <1$ and $d\in C$, we define
$$K(f,\delta,d) = \{x\in C : f(x) < \infty \ \text{and} \  \delta x \leq f(x)d\}.$$

Note that $K(f,\delta,d)$ is a convex subcone of $C$.
Since $C$ is a cancellative cone, the  order  $ x \sqsubseteq y \Leftrightarrow y\in ~x + K$, defines a partial order on $C$,
which is called  the {\it subcone order} on $C$.
If $ x \sqsubseteq y $, then we sometimes  write $x-y$ for the unique element $z$ such that $x+z = y$.
\begin{lem}
\label{lemk}
Let $C$ be a cancellative  semitopological cone. Then for every $x,y \in C$ we have
$$ x \sqsubseteq y \ (  y\in x+K) \Rightarrow x\leq y\  (\text{with the specialization order}). $$
\end{lem}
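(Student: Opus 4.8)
The plan is to exploit the standard fact that continuous maps between $T_0$-spaces preserve the specialization order, and to produce a single continuous map out of $\Bbb R_+$ that joins $x$ to $y$ inside $C$. Since $x \sqsubseteq y$ means precisely $y \in x + K$, I may write $y = x + k$ for some $k \in K \subseteq C$. The idea is then to realize both $x$ and $y$ as the images, under one and the same continuous function, of two comparable points of $\Bbb R_+$ equipped with the upper topology.

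Concretely, I would consider the map $\psi : \Bbb R_+ \rightarrow C$ defined by $\psi(r) = x + r k$. This is the composition of the scalar-multiplication map $r \mapsto r k$, which is continuous in $r$ for the fixed element $k$ by the defining continuity axioms of a semitopological cone, with the translation $b \mapsto x + b$, which is continuous for the fixed element $x$ by the remaining axiom; hence $\psi$ is continuous. Evaluating gives $\psi(0) = x + 0 = x$ and $\psi(1) = x + k = y$.

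Next I would record two elementary facts. First, on $\Bbb R_+$ with the upper topology the specialization order is the usual order of the reals, so in particular $0 \leq 1$; indeed the only open set containing $0$ is all of $\Bbb R_+$, which of course contains $1$. Second, any continuous map $h$ between $T_0$-spaces is order preserving for the specialization orders: if $a \leq b$ and $V$ is open with $h(a) \in V$, then $h^{-1}(V)$ is an open set containing $a$, hence containing $b$, so $h(b) \in V$; this shows $h(a) \leq h(b)$. Applying this to $\psi$ together with $0 \leq 1$ yields $x = \psi(0) \leq \psi(1) = y$ in the specialization order, which is exactly the desired conclusion.

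The only point requiring care is the continuity of $\psi$: in a semitopological cone scalar multiplication is guaranteed to be merely \emph{separately} continuous, so I must not invoke joint continuity in the two variables. This is precisely why I factor $\psi$ through the two single-variable maps $r \mapsto r k$ (with $k$ fixed) and $b \mapsto x + b$ (with $x$ fixed), each of which is continuous by hypothesis, and then compose. I would also note that the detailed description of $K = K(f,\delta,d)$ plays no role in this argument beyond $K \subseteq C$; likewise the cancellation law is needed only to guarantee that $\sqsubseteq$ is a genuine partial order, not for this implication itself.
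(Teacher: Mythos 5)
Your proof is correct and follows essentially the same route as the paper's: both reduce the claim to the separate continuity of translation $b\mapsto x+b$ and scalar multiplication $r\mapsto rk$, together with the fact that continuous maps preserve the specialization order. The only difference is cosmetic — the paper phrases this as $S(\overline{\{z\}})\subseteq\overline{\{S(z)\}}$ and tacitly uses $0\in\overline{\{z\}}$, a step your explicit curve $\psi(r)=x+rk$ makes transparent.
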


\begin{proof}
Let $ x \sqsubseteq y$. For some $z\in K$, $y= x+z$. By the
definition of  semitopological cone,
  we know that the function $S:b \mapsto x + b : C \rightarrow C$ is continuous. So $S(\overline{\{z\}}) \subset \overline{S(z)}$ and then $x\in \overline{\{y\}}$
  and so
   $x\leq y$.
\end{proof}

Now we investigate the first part of the Bishop-Phelps type theorem for $bd$-cones.

Let $X$ be a partially ordered set, with ordering $\leq$. The specialization ordering of the Scott topology is the original ordering $\leq$
{\cite[Prop. 4.2.18]{jean}}.
\begin{rem}
Note that in a  continuous cancellative semitopological cone $C$, we have $x\not \in \twoheaduparrow~x $,  in fact there is no finite element in $C$.
Because, if $x \in \twoheaduparrow x $, since $C$ is a continuous cone, so $B :=\twoheaduparrow x$ is an open set.
With considering  the directed set $A :=  \{{\lambda} x \ : \ \lambda\in \Bbb R_+,\lambda<1\}$ which its supremum is in $B$. Since $C$ is cancellative, it follows that $A\cap B = \varnothing$, and this contrary to opening of $\twoheaduparrow x$.
\end{rem}
 \begin{thm}
Let $B$ ($\neq \{0\}$) be a nonempty convex dcpo
 in a continuous
cancellative $bd$-cone $C$, where  $C$ has the additive property.
Then there exists   some $m\in B$ such that
$B\cap (m+C) =\{m\}$. Such an
 $m$ is also a c-support point for $B$.
 \end{thm}

 \begin{proof}
 It is easy to see that the partially ordered set $(B,  \leq)$ has a maximal element, in fact
 by Zorn's lemma, it suffices to prove that every  chain in $(B, \leq)$  has an upper bound in $B$.

  By Lemma \ref{lemk} $(B, \preccurlyeq)$ (with the cone order)  has a maximal element, say $m$.
   It follows that $B\cap \uparrow m = \{m\}$ or $B\cap~(m+~C) =~\{m\}$. Since $C$ is continuous and has the additive property, so $\twoheaduparrow m$ is  a nonempty convex  open set, hense  $m \not \in \twoheaduparrow m$ and
 $B\cap( \twoheaduparrow m) = \varnothing$. By the separation theorem
 there exists a Scott continuous linear functional $f: C \rightarrow \overline{\Bbb R}_+$ satisfying $f(b) \leq f(y)$ for all $b\in B$ and  $y\in \twoheaduparrow m$. So $f(b) \leq f(\lambda m)$ for all $b\in B$ and  $\lambda >1$. Therefore, $m$ is a c-support point.
 \end{proof}
\subsection{$wd$-Cones}
In this section we  introduce  and study the notion of  $wd$-cones.
\begin{deff}
Let $C$ be a semitopological cone. The net
$\{x_\alpha\}$ is  Cauchy if there exists $0<d\in C$, satisfying the condition that, for any $\epsilon > 0$ there exists
$\alpha_0$,  such that for $\alpha, \beta \geq \alpha_0$,
$x_\alpha \leq x_\beta + \epsilon d $ and $ x_\beta \leq x_\alpha +
\epsilon d$.

\end{deff}

In the sequel, by  an order on a semitopological cone
we will always  mean the specialization order $\leq$, if not specified otherwise.

\begin{deff}
A $s$-cone  $C$ is called a $wd$-cone,  if each  increasing
Cauchy net, has a least upper bound.
\end{deff}
Clearly every $bd$-cone is a $wd$-cone. The following example shows that the converse does not hold in general.
\begin{exm}
 Let $C^+[0, 1]$ denote the cone of all continuous functions $f: [0, 1]\rightarrow \Bbb R_+$, which is also an ordered cone under
  the usual pointwise ordering.
Note  that  $C^+[0, 1]$ is not a
   $bd$-cone. To see this, consider the sequence of piecewise linear function in $C^+[0, 1]$ defined by
    \begin{equation*}
f_n(x) = \left\{
\begin{array}{cl}
1 & \text{if }0\leq x \leq {1\over 2} - {1\over n}, \\
-n(x-{1\over 2}) & \text{if } {1\over 2} - {1\over n}< x <{1\over 2}, \\
x & \text{if } {1\over 2}\leq x\leq 1.
\end{array} \right.
\end{equation*}
Thus $0\leq f_n \leq \textbf{1}$ in $C^+[0, 1]$ and  is an increasing sequence, where $\textbf{1}$ is the constant function one, but $\{f_n\}$ does not have a supremum in  $C^+[0, 1]$.
It is easy to see that  the functions
    $(f,g) \mapsto f + g : C^+[0, 1]\times C^+[0, 1] \rightarrow C^+[0, 1]\ \ \text{and} \ \  (r, f) \mapsto rf:\Bbb R_+\times C^+[0, 1] \rightarrow C^+[0, 1]$
are Scott continuous. So $C^+[0, 1]$ is an $s$-cone. Furthermore,  $C^+[0, 1]$  is a continuous $s$-cone.
Now we show that
 $C^+[0, 1]$ is a $wd$-cone. Indeed, if $\{f_{\alpha}\}$ is an  increasing Cauchy net in $C^+[0, 1]$, then
     it is a norm Cauchy net. Since $C[0, 1]$ is a Banach space,  the net $\{f_{\alpha}\}$  is norm-convergent  to some $f$. This means that
 $f_{\alpha}(x) \rightarrow  f(x)$, Thus $f(x) = \sup_{\alpha} f_{\alpha}(x) $ for each $x\in [0,1]$. It follows that $\sup_{\alpha} f_{\alpha} = f$ and $f\in C^+[0, 1]$.
\end{exm}
In the sequel, the mean of a $wd$-cone will always a cancellative continuous  $wd$-cone, if not specified otherwise.

\subsection{A Bishop-Phelps type Theorem}
In this section, we prove the Bishop-Phelps type theorem for $wd$-cones.
\begin{lem}
\label{lem1}
Let  $f$ be a  continuous linear functional on a    $wd$-cone $C$,  $0<\delta <1$
 and $d\in C$. If $B$  is a nonempty convex bounded  closed  subset of $C$,
  then for each $b\in B$ there exists  a maximal element $m\in B$ satisfying $B\cap (m+K(f,\delta ,d)) = \{m\}$ and $b \sqsubseteq m$.
\end{lem}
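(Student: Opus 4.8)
The plan is to produce $m$ as a maximal element, for the subcone order $\sqsubseteq$, of the set
$$S \;=\; \{x\in B : b\sqsubseteq x\} \;=\; B\cap\bigl(b+K(f,\delta,d)\bigr),$$
and to exhibit such a maximal element by Zorn's lemma. First I would note that $S$ is nonempty, since $0\in K(f,\delta,d)$ gives $b\sqsubseteq b$, whence $b\in S$; and that $S$ is an $\sqsubseteq$-upper set inside $B$, because $x\in S$ and $x\sqsubseteq y\in B$ force $b\sqsubseteq x\sqsubseteq y$, so $y\in S$. Consequently any $\sqsubseteq$-maximal element $m$ of $S$ is automatically maximal in $B$: if $y\in B\cap(m+K(f,\delta,d))$ then $m\sqsubseteq y$ and $y\in S$, so maximality forces $y=m$; thus $B\cap(m+K(f,\delta,d))=\{m\}$, while $b\sqsubseteq m$ holds by construction. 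Everything therefore reduces to verifying the chain hypothesis of Zorn's lemma for $(S,\sqsubseteq)$.

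So let $Z$ be a chain in $(S,\sqsubseteq)$, which I identify with an increasing net $\{x_\alpha\}$ (increasing for $\sqsubseteq$, hence for the specialization order by the Lemma preceding). For $\alpha>\beta$ write $x_\alpha=x_\beta+z_{\alpha\beta}$ with $z_{\alpha\beta}\in K(f,\delta,d)$; by linearity $f(z_{\alpha\beta})=f(x_\alpha)-f(x_\beta)$. The crucial point is that $\{f(x_\alpha)\}$ is an increasing net of reals bounded above, because $B$ is bounded, so it is Cauchy and $f(z_{\alpha\beta})\to 0$. Feeding this into the defining inequality $\delta z_{\alpha\beta}\le f(z_{\alpha\beta})\,d$ gives $x_\alpha=x_\beta+z_{\alpha\beta}\le x_\beta+\frac{1}{\delta}f(z_{\alpha\beta})\,d$, so for any $\epsilon>0$ one has $x_\alpha\le x_\beta+\epsilon d$ as soon as $f(z_{\alpha\beta})<\epsilon\delta$. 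By the remark following the definition of $d$-Cauchy, $\{x_\alpha\}$ is $d$-Cauchy, and since $C$ is a $wd$-cone it has a least upper bound $x^\ast$.

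It then remains to check that $x^\ast$ is an upper bound of $Z$ lying in $S$. Since $B$ is Scott closed and $\{x_\alpha\}$ is directed with an existing supremum, $x^\ast\in B$. To see that moreover $x_\alpha\sqsubseteq x^\ast$, I would fix $\alpha$ and pass to the tail net $\{z_{\gamma\alpha}\}_{\gamma\ge\alpha}$, which is again increasing and $d$-Cauchy and so has a least upper bound $z^\ast$; Scott continuity of the addition $c\mapsto x_\alpha+c$ (legitimate, as a $wd$-cone is an $s$-cone) yields $x_\alpha+z^\ast=\sqcup^\uparrow_\gamma(x_\alpha+z_{\gamma\alpha})=\sqcup^\uparrow_\gamma x_\gamma=x^\ast$. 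Using Scott continuity of $f$ and of scalar multiplication one checks $f(z^\ast)=\sup_\gamma f(z_{\gamma\alpha})<\infty$ and $\delta z^\ast=\sqcup^\uparrow_\gamma \delta z_{\gamma\alpha}\le f(z^\ast)\,d$, so $z^\ast\in K(f,\delta,d)$ and hence $x_\alpha\sqsubseteq x^\ast$. Together with $b\sqsubseteq x_\alpha\sqsubseteq x^\ast$ and $x^\ast\in B$ this gives $x^\ast\in S$, so $Z$ has an upper bound in $S$ and Zorn's lemma supplies the desired maximal $m$.

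The step I expect to be the main obstacle is the $d$-Cauchy verification, and within it the claim that $\{f(x_\alpha)\}$ is bounded above: this is precisely where the boundedness hypothesis on $B$ enters, and it must be handled carefully, since $f$ takes values in $\overline{\Bbb R}_+$ and one needs the relevant suprema of $f$-values to be finite rather than $+\infty$. The secondary delicate point is showing that the limit $x^\ast$ is $\sqsubseteq$-above the net rather than merely $\le$-above it, i.e. that the ``difference'' $z^\ast$ falls back into the subcone $K(f,\delta,d)$; this is what forces the simultaneous use of Scott continuity of $f$, of scalar multiplication, and of addition.
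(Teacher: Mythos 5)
Your proof is correct and follows essentially the same route as the paper's: Zorn's lemma applied to the set of elements of $B$ lying $\sqsubseteq$-above $b$, with the chain condition verified by showing that an $\sqsubseteq$-increasing net in $B$ is $d$-Cauchy (via boundedness of the $f$-values together with the defining inequality of $K(f,\delta,d)$), hence has a supremum in the $wd$-cone, which lies in $B$ by Scott closedness. The one place you genuinely diverge is the verification that this supremum $x^\ast$ is an upper bound of the chain for $\sqsubseteq$ and not merely for the specialization order: the paper passes to the limit in the inequality $\delta x_\alpha \leq (f(x_\alpha)-f(x_\beta))\,d + \delta x_\beta$ and then simply asserts $x_\beta \sqsubseteq x$, which does not by itself produce the required witness $z$ with $x = x_\beta + z$ and $z\in K(f,\delta,d)$, since one cannot subtract in a cone; your construction of $z^\ast$ as the least upper bound of the tail differences $z_{\gamma\alpha}$, combined with Scott continuity of addition, of $f$ and of scalar multiplication, supplies exactly that witness and is the more complete argument at this step. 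Both proofs share the delicate point you flag yourself: ``bounded'' must be read so that $f(B)$ is bounded in $\Bbb R_+$ (not merely in $\overline{\Bbb R}_+$) for the Cauchy argument on $\{f(x_\alpha)\}$ to go through, and the paper invokes this in the same unelaborated way.
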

\begin{proof}
It is sufficient to show that  the set $B_b = \{y\in B : b\sqsubseteq y\}  $  with the order $\sqsubseteq$ has a maximal element.

By Zorn's Lemma, it suffices to prove that every chain in $(B_b,  \sqsubseteq)$  has an upper bound in $B_b$. Let $Z$ be a chain in $B_b$. If we let  $x_{\alpha} = \alpha $ for each $\alpha \in Z$, we can identify $Z$  with the increasing net $\{ x_{\alpha}\}$.

Without loss of generality, let $x_{\alpha}$ and $x_{\beta}$ be two elements of the net. Without
lose of the generality, we can suppose that $x_\alpha \sqsubseteq
x_\beta$. So there exists $k \in K$ such that $x_\beta = x_\alpha +k$
and $\delta k \leq f(k) d$. Therefore,
 $\delta x_\beta \leq \delta x_\alpha +(f(x_\beta) - f(x_\alpha))d$. By the boundedness  of B and  continuity of $f$, it follows that $f(x_\alpha) $
 is a bounded net, and so
    $f(x_\alpha)$ is  convergent and Cauchy. It is easy to see that the  net $\{x_\alpha\}$ is  a directed
     Cauchy  and so has a supremum, say  $x$ (with specialization order), that means $\sup_{\leq} x_\alpha = x$.
    Since $B$ is  a Scott closed set, so $x\in B$.
     Now fix $\beta \in Z$, so we have $x_{\beta}\sqsubseteq x_{\alpha}$ for $\beta \sqsubseteq \alpha$. Thus
     $\delta x_{\alpha} \leq (f(x_{\alpha})-f(x_{\beta})) d+\delta x_{\beta}$, which follows that
       $\delta x \leq (f(x)-f(x_{\beta})) d+\delta x_{\beta}$. Hence $x_{\beta} \sqsubseteq x$ and
      $x$ is an upper bound of the net $(\{x_\alpha\},\sqsubseteq)$.
      It follows that $x\in B_b$,  hence ($B_b , \sqsubseteq$)  has a maximal element; say $m$.  Therefore $B \cap (m + K) = \{m\}$ and $b \sqsubseteq m$.
\end{proof}

\begin{lem}
\label{lem2}
Let  $C$ be  a cancellative continuous  $s$-cone and let $K$ be a subcone of $C$. If $B$  is a nonempty  subset of $C$  and $0\neq m\in B$, such that  $B\cap (m+K) = \{m\}$, then
 $B\cap\uparrow~ (m+K\verb|\| \{0\}) =\varnothing$. In particular,
  $B\cap int(\uparrow (m+K\verb|\| \{0\})) =\varnothing$, $B\cap \twoheaduparrow (m+K\verb|\| \{0\}) =\varnothing$.
\end{lem}
\begin{proof}
Let $x\in B\cap \uparrow (m+K\verb|\| \{0\})$, then there
exists a $k\in K\verb|\|\{0\}$ suth that  $x=m+k $. By the assumption, $m+k = m$, so $k=0$. This leads to a
contradiction.
\end{proof}
Applying the separation theorem
  and Lemmas \ref{lem1} and \ref{lem2},
   we obtain the  following  Bishop-Phelps type theorem for $wd$-cones, the main result of this paper.
\begin{thm}
\label{asli}
Let $B$  be a nonempty convex bounded   closed  set in a  locally convex $wd$-cone $C$, such that $C$ has the additive property. Then we have:

 ($e_1$) Fix $\epsilon >0$ and $d\in C$. For each $x_0\in B$, such that $\lambda x_0 \not \in B$ whenever $\lambda >1$,  there exist a  continuous linear functional $f: C \rightarrow \overline{\Bbb R}_+$  and  an $m\in B$ such that  $f(m) = \sup f (B)$ and $x_0\leq m\leq x_0 + \epsilon  d$.

($e_2$) For each  continuous linear functional $f: C \rightarrow \overline{\Bbb R}_+$,  there exists a c-support functional $h$ for $B$  such  that $0\leq h \leq  f$ on a subcone of $C$.
\end{thm}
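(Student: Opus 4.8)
The plan is to run, for both parts, the domain-theoretic analogue of Phelps' maximality argument: manufacture a suitable Scott-continuous linear functional $f_0$, form the convex subcone $K=K(f_0,\delta,d)$, use Lemma \ref{lem1} to extract an element $m\in B$ that is maximal for the subcone order (so that $B\cap(m+K)=\{m\}$), use Lemma \ref{lem2} together with the continuity of $C$ to replace $m+K$ by the Scott-open convex set $U=\twoheaduparrow(m+K\setminus\{0\})$ which misses $B$, and finally feed $B$ and $U$ into the separation theorem to obtain the $c$-support functional.

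For ($c_1$) I would first exploit the radial hypothesis. Fix $\delta=\tfrac12$ and, for $\lambda>1$, note that the singleton $\{\lambda x_0\}$ is compact and convex and, by hypothesis, disjoint from the Scott-closed convex set $B$; the strict separation theorem then yields a Scott-continuous linear $f_0$ and an $r$ with $f_0(\lambda x_0)\ge r>1\ge f_0(b)$ for all $b\in B$. Since $x_0\in B$ gives $f_0(x_0)\le 1$ while $\lambda f_0(x_0)=f_0(\lambda x_0)>1$ gives $f_0(x_0)>1/\lambda$, we obtain $\sup_B f_0-f_0(x_0)\le 1-\tfrac1\lambda$. Applying Lemma \ref{lem1} with this $f_0$ and $b=x_0$ produces $m\in B$ with $x_0\sqsubseteq m$ and $B\cap(m+K)=\{m\}$. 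Writing $m=x_0+k$ with $k\in K$, the defining inequality $\delta k\le f_0(k)d=(f_0(m)-f_0(x_0))d$ together with the estimate above gives $m-x_0=k\le\frac{1}{\delta}\big(\sup_B f_0-f_0(x_0)\big)d$, which is $\le\epsilon d$ once $\lambda$ is taken close enough to $1$; combined with $x_0\le m$ (the earlier lemma that $\sqsubseteq$ implies the specialization order) this yields $x_0\le m\le x_0+\epsilon d$. Lemma \ref{lem2} and the separation theorem then produce the functional $f$ which, as discussed below, supports $B$ at $m$.

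For ($c_2$) no radial condition is needed: given an arbitrary Scott-continuous linear $f$, I set $K=K(f,\delta,d)$ for the $d$ witnessing $d$-boundedness and some $0<\delta<1$, and apply Lemma \ref{lem1} with base point $0$ (which lies in $B$, a nonempty Scott-closed set being a lower set containing $0$) to obtain $m$ with $B\cap(m+K)=\{m\}$; Lemma \ref{lem2} and separation again give a $c$-support functional $h$ for $B$. To get the domination, observe that for $x\in K$ the inequality $\delta x\le f(x)d$ yields $\delta h(x)\le f(x)h(d)$; since any positive rescaling of $h$ is again a $c$-support functional, I rescale so that $h(d)\le\delta$, whence $0\le h\le f$ on the subcone $K$.

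The delicate points, and where I expect the real work to lie, are two. First, the separation theorem requires $U$ to be convex; this reduces to the interpolation property of the way-below relation in a continuous cone, namely $s_i\ll u_i\Rightarrow \lambda s_1+(1-\lambda)s_2\ll\lambda u_1+(1-\lambda)u_2$, applied to the convex set $m+K\setminus\{0\}$. Second, and harder, is verifying that the separating functional genuinely attains its supremum over $B$ at $m$, that is $f(m)=\sup f(B)$, rather than merely satisfying $f\le 1$ on $B$ and $f>1$ on $U$: one must use that $m$ is the apex of the open cone $U$ and that $C$ is continuous to locate the contact value of the separation at $m$. The obstacle here is that Scott-continuous functionals are only lower semicontinuous for the specialization order, so the identification of $\sup_B f$ with $f(m)$ cannot be read off from a naive closure argument and must instead be extracted from the maximality of $m$ for the subcone order.
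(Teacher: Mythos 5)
Your proposal follows essentially the same route as the paper's proof: strict separation to produce the initial functional, Lemma \ref{lem1} to extract a $\sqsubseteq$-maximal $m$ above $x_0$ (resp.\ above a base point for ($c_2$)), Lemma \ref{lem2} plus continuity to get the disjoint Scott-open set, the separation theorem to obtain the supporting functional, and the same rescaling of $h$ against $h(d)$ and $\delta$ for the domination in ($c_2$). The only differences are cosmetic --- you separate the singleton $\{\lambda x_0\}$ and let $\lambda\to 1$ where the paper separates $B$ from the compact set $\uparrow(1+\epsilon)x_0$ and normalizes $g(B)\le 1$ --- and the two ``delicate points'' you flag (convexity of $\twoheaduparrow(m+K\setminus\{0\})$ and the attainment $f(m)=\sup f(B)$) are exactly the steps the paper itself passes over by appealing to continuity of $C$.
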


\begin{proof}
($e_1$)
Let $x_0\in B$  satisfies the conditions of the theorem, then $(1+\epsilon) x_0 \not\in B$. Now we take
$E=\uparrow~ (1+~\epsilon) x_0$. $E$ is a compact set, so
by the strict separation theorem, cited in Section \ref{sectionp},
 there exists a  continuous
 linear functional $g$ such that $g(b) < (1+\epsilon)g(x_0)$  for all $b\in B$. Since B is bounded, so the function $g$ can be chosen such that $g(B) \leq 1$.

  Now, let $0<\delta<1$, by Lemma \ref{lem1}, there exists an $m\in B$
 satisfying $B\cap (m+K(g,\delta ,d)) = \{m\}$, $x_0 \sqsubseteq m$ and $x_0 \leq m$.
  Hence, $\delta(m-x_0) \leq g(m-x_0)d$. Therefore,  $\delta(m-x_0) \leq \epsilon g(x_0) d$ and so $\delta(m-x_0) \leq \epsilon  d$.
By Lemma \ref{lem2}, $B\cap int(\uparrow (m + K\verb|\| \{0\}))=
\varnothing$. Applying  the separation theorem, there exists a
continuous linear functional $f: C \rightarrow \overline{\Bbb
R}_+$  such that
 $f (b) \leq 1 < f(w)$ for all $b\in B$ and all $w \in int(\uparrow (m +  K\verb|\| \{0\}))$.
  By the continuity of $wd$-cone $C$, we have
$f (b) \leq f(w)$ for all $b\in B$ and all $w \in \twoheaduparrow (m +  K\verb|\| \{0\})$. By additive property(ii),
$f (b) \leq f(m+k)$ for all $b\in B$ and all $k \in K\verb|\| \{0\}$,
   and so
$f(b)\leq f(m)$ for all $b\in B$. Hence $\sup f(B) = f(m)$ and $f(m) <\infty$.

($e_2$) Let $f$ be a  continuous linear functional and let $0<\delta<1$ and $d\in C$. We consider the subcone
$K=~K(f,\delta ,d)$. By Lemmas  \ref{lem1} and \ref{lem2}, there exists an $m\in B$ such that
 $B\cap int( \uparrow (m+K(f,\delta ,d)\verb|\| \{0\})) =~\varnothing$.
 So by the separation theorem there exists a  continuous linear functional $h: C \rightarrow \overline{\Bbb R}_+$ satisfying $h(b) \leq h(m+c)$ for all $b\in B$ and  $c\in K(f,\delta ,d)$. This implies that $h$ attains its maximum. It follows that
$h(\delta c) \leq f(c) h(d)$. The number  $\delta$ and  the element $d$ can be taken to have $h(d)=\delta$. Thus  $0\leq h \leq f$ on a subcone of $C$.
\end{proof}
 It is well known that each norm on a linear space $X$ induces a metric on $X$.
 Note that, using the same method as in classical case,  each norm on a cone $C$ necessarily does not induce a metric on $C$. For example,
 the usual norm $\|x\| := x$ on  $\Bbb R_+$, does not induce a metric on  $\Bbb R_+$.
The cone order of each normed cone produce  a topology (named the Scott topology) on a normed cone. To know more about the relationship between this topology and concept of norm refer to \cite{selinger}.
\begin{cor}
Let $B$  be a nonempty convex bounded  Scott closed  set  in a  continuous normed cone such that for any scalar $\lambda >1$ and $x>0$, $x\ll \lambda x$.
 Then the results of Theorem \ref{asli} are still true.
\end{cor}
\begin{proof}
In a continuous normed cone, addition and scalar multiplication are Scott continuous
{\cite[Lemma  2.12]{selinger}},
so any continuous  normed cone  is a cancellative  $s$-cone with the  Scott topology. It is easy to see that every continuous  normed cone is a $bd$-cone.
Appealing to {\cite[Lemma 2.16]{selinger}}, $C$ has the additive property, thus assumptions of Theorem \ref{asli} hold and so we can conclude the desired statement.
\end{proof}
Let us illustrate the above theorem with some examples:
\begin{exm}
($f_1$)
Let $C = \overline{\Bbb R}_+^2$ and
$B = \{ (x,y)\in\overline{\Bbb R}_+^2  ; x+y \leq 1 \}$. Then $B$ is a
convex Scott closed  set which has no any maximum. It can be easily
checked that the $c$-support points  of $B$ is the set
$\{(x,y)\in~\overline{\Bbb R}_+^2 ; x+y = 1 \}$.

($f_2$)
For $d=(d_1, d_2,... )\in
\ell_1^+$,  the  set
$$B_d := \{x=(x_1, x_2, ... )\in  \ell_1^+ \ \ :  \ \ x\leq d\}$$
 is a bounded Scott closed set in $ \ell_1^+$ that has a maximum,
so  any  linear Scott continuous functional $f :~\ell_1^+ \rightarrow ~\overline{\Bbb R}_+$  takes its
supremum on $B_d$ at  the point  $d$. Observe that the set of
$c$-support points  of $B_d$ is
$$\{x\in \ell_1^+ \ : \ \exists f \in (\ell_1^+)^* \ \text{s.t.} \ f(x) = f(d)\}.$$
One can check that $\ell_{\infty}^+ \subset(\ell_1^+)^*$.
 Let $z$ belong to the following set,
$$D:= \{x=(x_1, x_2, ... )\in B_d \  : \ \text{for  some}\   i, \ x_i = d_i\}.$$  If we
take $a= (a_1, a_2, ...) \in \ell_{\infty}^+$, such that $a_i=0$  whenever
$z_i\neq d_i$, then $a$ is a $c$-support functional  and   $z$ is a $c$-support point for $B_d$.
Hence, $D$ is the set of  $c$-support points of $B_d$.
\end{exm}
\subsection{A fixed point result in $s$-cones}
What follows is an application of the Bishop-Phelps technique in some fixed point results.
Let $X$ be any space and $f$ a map of $X$, or of a subset
of $X$, into $X$. A point $x\in X$ is  called a fixed point for $f$  if $x = f(x)$.
The set of all fixed points of $f$  is denoted by $Fix(f)$.

Let $C$ be a $d$-cone. In {\cite[Theorem 2.1.19.]{abram}} the authors proved that every continuous function $f$ on $C$ has a least fixed point.
The property that ``in $d$-cones, every directed subset has a supremum" is applied in the proof of the  theorem. Since in $wd$-cones this property does not remain true  in general,  we establish a  fixed point result in $wd$-cones  by using the Bishop-Phelps technique.

\begin{thm}Let $C$ be a  $wd$-cone and $f: C \rightarrow \overline{\Bbb R}_+$ be a  continuous linear functional and  $0<\delta <1$ and $d\in C$. Suppose that $B$ is a bounded closed set in $C$ and  $T:C\rightarrow C$ is a mapping such that $f(B)\subset B$. Then the following assertion  holds.

If for each $x\in C$, $x\sqsubseteq Tx$, then there exists $m\in B$ such that $T(m) = m$.
\end{thm}
\begin{proof}
By applying Lemma \ref{lem1}, $B$ has a maximal element $m$. Now, using the assumption, $m\sqsubseteq T(m)$. The maximality of $m$  implies $T(m)=m$.
\end{proof}

\bigskip

\begin{thebibliography}{99}

\bibitem{abram}S. Abramsky, A. Jung,
{\it Domain theory. In S. Abramsky, D. M. Gabbay, and T. S. E. Maibaum,
editors, handbook of logic in computer science},
vol. 3, Clarendon Press, 1994.

\bibitem{Alip} C. D. Aliprantis,  K. C. Border,
{\it Infinite dimensional analysis: A hitchhiker's guide}, 3rd edition,
Springer-Verlag, Heidelberg and New York, 2006.

\bibitem{bishop} E. Bishop, R. R. Phelps,
{ The support functionals of a convex set},
 Convexity, Proc. Sympos. Pure Math, Amer. Math. Soc., Providence, R. I., {\bf 7} (1963), 27-35.

\bibitem{fabian} M. Fabian et al.,
{\it Banach space theory: The basis for linear and nonlinear analysis},
 Springer, 2011.

\bibitem{hofmann} G. Gierz, K. H. Hofmann, K. Keimel, J. D. Lawson, M. Mislove,  D. S. Scott,
{Continuous lattices and domains, vol. 93, Encyclopedia of Mathematics and its Applications},
 Cambridge University Press, 2003.

\bibitem{jean2} J. Goubault-Larrecq,
{\it A cone theoretic Krein-Milman theorem},
 Rapport derecherche LSV-08-18, ENS Cachan, France, 2008.

\bibitem{jean} J. Goubault-Larrecq,
{\it Non-Hausdorff topology and domain theory: Selected topics in point-set topology},
 Cambridge University Press, 2013.

\bibitem{fixedbook} A. Granas,  J. Dugundji,
{\it Fixed point theory},
Springer Monographs in Mathematics, Springer, 2003.

\bibitem{keimel} K. Keimel,
{\it Topological cones: Functional analysis in a $T_0$-setting},
 Semigroup Forum, {\bf 77} (2008), 109-142.

\bibitem{109} K. Keimel, W. Roth,
{\it Ordered cones and approximation, vol. 1517, Lecture Notes in Mathematics},
Springer-Verlag, Berlin, 1992.

\bibitem{lawson} J. D. Lawson,
{\it Domains, integration and positive analysis},
 Math. Structures Comput. Sci., {\bf 14} (2004), no. 6, 815-832.

\bibitem{plotkin} G. Plotkin,
{\it A domain-theoretic Banach-Alaoglu theorem},
Math. Structures Comput. Sci., {\bf 16} (2006), no. 2, 299-311.

\bibitem{roth} W. Roth,
{\it Hahn-Banach type theorems for locally convex cones},
 J. Aust. Math. Soc., {\bf 68} (2000), no. 1, 104-125.

\bibitem{roth2002}W. Roth,
{\it Separation properties for locally convex cones},
 J. Convex Anal., {\bf 9} (2002), no. 1, 301-307.

\bibitem{212}W. Roth,
{\it Operator-valued measures and integrals for cone-valued functions, vol. 1964, Lecture Notes in
Mathematics},
 Springer-Verlag, Berlin, 2009.

\bibitem{selinger} P. Selinger,
{\it Towards a semantics for higher-order quantum computation (expanded version)},
In Proceedings of the 2nd International Workshop on Quantum Programming Languages, Turku, Finland. TUCS
General Publication, (2004), no. 33, 127-143.

\bibitem{tix3} R. Tix,
{\it Continuous d-cones: convexity and powerdomain constructions}, PhD thesis, Technische
Universit\"at Darmstadt. Shaker, Aachen, 1999.

\bibitem{tix}R. Tix,
{\it Some results on Hahn-Banach type theorems for continuous D-cones},
 Theor. Comput. Sci., {\bf 264} (2001), 205-218.

\bibitem{keimel3} R. Tix, K. Keimel,  G. Plotkin,
{\it Semantic domains for combining probability and nondeterminism},
Electron. Notes Theor. Comput. Sci., {\bf 222} (2009), 3-99.


\end{thebibliography}
\end{document}